\title{Log-concavity of the partition function}
\author[Stephen DeSalvo and Igor Pak]{Stephen DeSalvo$^\ast$ and Igor Pak$^\ast$}
\thanks{\thinspace ${\hspace{-.45ex}}^\ast$Department of Mathematics, UCLA, Los Angeles, CA 90095, USA; \ts
\texttt{\{stephendesalvo;pak\}@math.ucla.edu}}
\date{July 4, 2014}
\newtheorem{thm}{Theorem}[section]
\newtheorem{cor}[thm]{Corollary}
\newtheorem{prop}[thm]{Proposition}
\newtheorem{lemma}[thm]{Lemma}
\newtheorem{conjecture}[thm]{Conjecture}
\newtheorem{con}[thm]{Conjecture}
\newcommand{\ignore}[1]{}
\begin{document}

\def\BI{\mathbb{I}}

\def\e{\mathbb{E \,}}
\def\M{{\mathcal M}}
\def\BN{\mathbb{N}}
\def\BR{\mathbb{R}}
\def\BZ{\mathbb{Z}}
\def\L{{\mathcal L}}
\def\p{\mathbb{P}}
\def\var{{\rm Var}}
\def\cov{{\rm Cov}}
\def\diam{{\rm diam\, }}
\def\SD{{\rm SD}}
\def\A{{\mathcal A}}
\def\B{{\mathcal B}}
\def\C{{\mathcal C}}
\def\F{{\mathcal F}}
\def\X{{\mathcal X}}
\def\BX{{\mathbb{X}}}
\def\BY{{\mathbb{Y}}}
\def\BI{{\mathbb{I}}}

\def\.{\hskip.06cm}
\def\ts{\hskip.03cm}
\def\mts{\hspace{-.04cm}}

\begin{abstract}
We prove that the partition function $p(n)$ is log-concave for all~$n>25$.
We then extend the results to resolve two related conjectures by Chen
and one by Sun.  The proofs are based on Lehmer's estimates on the 
remainders of the Hardy--Ramanujan and the Rademacher series for~$p(n)$.

\smallskip
\noindent \textbf{Keywords.} Integer Partition, Partition Function, Log-Concave Sequence, Asymptotic Analysis, Error Estimates

\smallskip
\noindent \textbf{MSC classes:} 05A17, 11N37, 65G99

\end{abstract}

\maketitle

\section{Introduction}

\noindent
A sequence $\{a_n\}$ is \emph{log-concave} if it satisfies
\[a_{n}^2 \. - \. a_{n-1}\ts a_{n+1} \. \ge \. 0 \quad \text{for all $n$\ts.}\]
Notable examples of log-concave sequences are the binomial coefficients,
the Stirling numbers, the Bessel numbers, etc.~(see~\cite{B,LW,S} for
well-written surveys).
Despite the interest in log-concave sequences and the detailed asymptotics
of the partition function in combinatorics, the log-concavity of $p(n)$
for all $n>25$ remained an open problem 
(see Section~\ref{sec:final}).  Note that it fails for all odd values $n \leq 25$.

\begin{thm}\label{log concave theorem}
Sequence~$p(n)$ is log-concave for all $n>25$.
\end{thm}

This intuition behind the theorem comes from explicit calculations for small~$n$,
and the Hardy--Ramanujan asymptotic formula \cite{HR}
\begin{equation} \label{HR asymptotics}
 p(n) \. \sim \. \frac{1}{4\sqrt{3}\ts n} \,\, e^{\pi \sqrt{\frac{2}{3}\ts n}} \, \ \quad \text{as } \ n\to\infty\..
 \end{equation}
The function on the r.h.s.~of~(\ref{HR asymptotics}) is easily shown to be log-concave,
but without guaranteed error bounds there is no way of knowing precisely when the
asymptotic formula dominates the calculation.
Our proof is based on the estimates by Lehmer~\cite{L1,L2} which provide improved explicit guaranteed
error estimates valid for all~$n$. We use these to prove the theorem for all $n\geq 2600$,
and check log-concavity for smaller values using {\sc Mathematica}.

\smallskip

We turn to the following two recent conjectures by William Chen which partially motivated our
study, see~\cite[pp.~117--121]{C}.

\begin{con}[Chen]\label{conj1}  For all $\ts n>1$, we have
\begin{equation}\label{eq:1}
\frac{p(n-1)}{p(n)}\left(1 + \frac{1}{n}\right) \,>\, \frac{p(n)}{p(n+1)}\,.\end{equation}
\end{con}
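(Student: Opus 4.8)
The plan is to recast (\ref{eq:1}) as a sharp two-sided estimate for the second difference of $\log p(n)$ and to read off both sides from the Lehmer bounds already used for Theorem~\ref{log concave theorem}. Multiplying (\ref{eq:1}) by the positive quantity $p(n)\ts p(n+1)$ and rearranging, the conjecture is equivalent to
\begin{equation*}
\frac{n}{n+1} \; < \; \frac{p(n-1)\ts p(n+1)}{p(n)^2} \; \le \; 1 ,
\end{equation*}
in which the right inequality is exactly log-concavity, i.e.\ Theorem~\ref{log concave theorem}, valid for $n>25$. Taking logarithms and writing $\Delta(n) := \log p(n+1) - 2\ts\log p(n) + \log p(n-1)$, it remains to prove the lower bound $\Delta(n) > \log\frac{n}{n+1} = -\log\!\bigl(1+\frac1n\bigr)$.

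First I would insert Lehmer's approximation $p(n) = T(n)\ts\bigl(1+\varepsilon(n)\bigr)$, where $T(n)$ is the leading term of the Rademacher series and $|\varepsilon(n)|$ is the explicit, exponentially small relative remainder from \cite{L1,L2}. This splits $\Delta(n) = \Delta_T(n) + \Delta_\varepsilon(n)$ into a smooth part $\Delta_T(n) = \log T(n+1) - 2\ts\log T(n) + \log T(n-1)$ and an error $\Delta_\varepsilon(n)$ bounded by $4\max_{|m-n|\le1}\bigl|\log(1+\varepsilon(m))\bigr|$. Since the dominant part of $\log T(n)$ is $\mu(n) = \frac{\pi}{6}\sqrt{24n-1}\sim \pi\sqrt{2n/3}$, a routine expansion gives
\begin{equation*}
\Delta_T(n) \; = \; -\,\frac{\pi}{2\sqrt{6}}\,n^{-3/2} \: + \: O\!\bigl(n^{-2}\bigr),
\end{equation*}
which is of strictly smaller order than the target $-\log(1+\frac1n)\sim -\frac1n$.

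The point is the width of the margin. The required inequality reduces, to leading order, to $\frac{\pi}{2\sqrt6}\ts n^{-3/2} < \frac1n$, i.e.\ $n > \pi^2/24$, so the main-term estimate holds with a gap of order $1/n$ for every $n\ge1$. It therefore suffices to absorb the two corrections—the Lehmer remainder $\Delta_\varepsilon(n)$ and the $O(n^{-2})$ tail of $\Delta_T(n)$—into this gap; because $\varepsilon(n)$ decays like $e^{-c\sqrt{n}}$ this is immediate once $n$ exceeds an explicit threshold $N_0$ of the same order as the bound $n\ge2600$ used for Theorem~\ref{log concave theorem}.

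For the finitely many remaining $1<n<N_0$ I would verify (\ref{eq:1}) directly in \textsc{Mathematica}, exactly as in the proof of Theorem~\ref{log concave theorem}; note that this range also covers the odd values $n\le25$ at which log-concavity fails but (\ref{eq:1}) still holds thanks to the slack factor $1+\frac1n$. The main obstacle is entirely bookkeeping: propagating Lehmer's explicit constants through the logarithm and the second difference to pin down a threshold $N_0$ small enough for the computer check to be feasible. The analytic core is comfortable, since the natural scale $n^{-3/2}$ of $\Delta(n)$ lies a full power of $n^{1/2}$ inside the admissible window $\tfrac1n$.
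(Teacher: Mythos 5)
Your proposal is correct and follows essentially the same route as the paper: it reduces the conjecture to the bound $p_2(n) < \log\left(1+\frac{1}{n}\right)$ on the second difference of $\log p(n)$, establishes $p_2(n) = O(n^{-3/2})$ via the Lehmer-based decomposition $p(n) = T(n) + R(n)$ (exactly the content of Proposition~\ref{almost proof prop}, with your constant $\frac{\pi}{2\sqrt{6}} = \frac{C}{4}$ matching Corollary~\ref{rate theorem}), exploits the same order-of-magnitude gap between $n^{-3/2}$ and $\frac{1}{n}$, and finishes with the same \textsc{Mathematica} check below a threshold of order $2600$. The only cosmetic slip is calling the two-sided estimate ``equivalent'' to the conjecture—the upper bound $\frac{p(n-1)\ts p(n+1)}{p(n)^2} \le 1$ is log-concavity, not part of \eqref{eq:1}—but since you only prove the lower bound, this does not affect the argument.
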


\begin{con}[Chen] \label{conj2} For all \ts $n>m>1$, we have
\begin{equation}\label{eq:2}
p(n)^2 \. - \. p(n-m)\. p(n+m) \, \ge \, 0 \..
\end{equation}
\end{con}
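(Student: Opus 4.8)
The plan is to deduce Conjecture~\ref{conj2} from the log-concavity of Theorem~\ref{log concave theorem} wherever possible, and to treat the few remaining cases by the same explicit asymptotic machinery. Writing $r_j = p(j+1)/p(j)$, observe that Theorem~\ref{log concave theorem} is equivalent to the statement that the ratio sequence $\{r_j\}$ is non-increasing for all $j \ge 25$. The inequality~\eqref{eq:2} rewrites as $p(n)/p(n-m) \ge p(n+m)/p(n)$, that is, as a comparison of two products of $m$ consecutive ratios,
\[
\prod_{i=0}^{m-1} r_{n-m+i} \ \ge \ \prod_{i=0}^{m-1} r_{n+i}\,.
\]
First I would dispose of the bulk regime $n-m \ge 25$: here every index on the left is at least $25$, so pairing the $i$-th factors and using that $r_{n-m+i} \ge r_{n+i}$ (both indices lie in the non-increasing range and $n-m+i < n+i$) gives the inequality term by term.

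The remaining cases are those with $n-m \le 24$, equivalently $m \ge n-24$; for fixed $\ell := n-m \in \{1,\dots,24\}$ this is the family $p(n)^2 \ge p(\ell)\ts p(2n-\ell)$. I would handle these in two steps. For large $n$ the inequality has a comfortable margin: by~\eqref{HR asymptotics}, the quantity $2\log p(n) - \log p(\ell) - \log p(2n-\ell)$ is dominated by $\pi\sqrt{2/3}\,\bigl(2\sqrt{n} - \sqrt{2n-\ell}\bigr) \sim (2-\sqrt2)\,\pi\sqrt{2/3}\,\sqrt n$, which tends to $+\infty$, while the prefactors and remainders contribute only $O(\log n)$. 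Using Lehmer's explicit error estimates~\cite{L1,L2} exactly as in the proof of Theorem~\ref{log concave theorem}, I would convert this into a guaranteed threshold $N_\ell$ beyond which~\eqref{eq:2} holds, and then verify the finitely many cases $\ell + 2 \le n < N_\ell$ (over the $24$ values of $\ell$) by direct computation in {\sc Mathematica}.

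A cleaner alternative for this boundary regime is to telescope down to an instance already covered by the bulk case. Since $n-m' = 25$ for $m' = n-25$, the bulk argument yields $p(n)^2 \ge p(25)\ts p(2n-25)$ for $n > 26$, so it remains to show $p(25)/p(\ell) \ge p(2n-\ell)/p(2n-25)$. Both sides are products of the same number $25-\ell$ of consecutive ratios, the left over the small indices $\ell,\dots,24$ and the right over the far indices $2n-25,\dots,2n-\ell-1$; since each small-index ratio exceeds $1$ by a fixed amount while $r_k \to 1$, the right-hand factors eventually fall below the left-hand ones, giving the inequality for all $n$ past a small explicit bound.

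I expect the boundary regime to be the main obstacle. Although it is asymptotically the easy regime---the margin grows like $\sqrt n$ rather than shrinking like $n^{-3/2}$ as in the delicate $m=1$ case---the difficulty is to make the crossover fully rigorous with explicit constants and to keep the thresholds $N_\ell$ small enough that the finite verification stays feasible. The comparisons nearest the threshold index $25$ are genuinely tight (for instance $r_{24} < r_{25}$, which is exactly why log-concavity fails at $n=25$ and why the hypothesis $m>1$ is needed), so care is required to confirm that the finitely many small-index ratios never actually reverse the inequality.
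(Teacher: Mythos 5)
Your proposal is correct and takes essentially the same route as the paper: your bulk case $n-m\ge 25$ is precisely the paper's first step that log-concavity (Theorem~\ref{log concave theorem}) implies strong log-concavity---which you prove inline via monotonicity of the ratios $r_j$ for $j\ge 25$ rather than citing~\cite{Sa}---and your boundary cases $n-m\le 24$ are handled, as in the paper, by the exponential growth of $2\log p(n)-\log p(2n-\ell)$ made explicit through guaranteed error bounds plus a finite \textsc{Mathematica} verification. The only difference is bookkeeping in the boundary regime: the paper reduces it via $p(n)\ge p(m+1)$ and $p(n-m)\le p(25)$ to the single one-parameter inequality $p(m+1)^2 \ge p(25)\, p(25+m)$, settled with Erd\H{o}s's upper bound $p(m)<e^{C\sqrt{m}}$ and the Hardy--Ramanujan lower bound for $m\ge 300$ and by direct computation below that, whereas you run $24$ families indexed by $\ell=n-m$ (or the telescoping variant), but the substance is the same.
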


Note that inequality~\eqref{eq:1} goes in the opposite
direction to log-concavity.  We fully establish Conjecture~\ref{conj1} in Theorem~\ref{t:1} and show
that the $(1+1/n)$ term can be further sharpened to $(1+O(n^{-3/2}))$, see
Theorem~\ref{conjecture 1}.  The inequality~\eqref{eq:2} is sometimes called
\emph{strong log-concavity} when extended to all~$m\ge 1$;
we prove Conjecture~\ref{conj2} in Theorem~\ref{t:3}.

\smallskip

The rest of the paper is structured as follows.  We first prove Theorem~\ref{log concave theorem}
in Section~\ref{proof section}. In a short but technical Section~\ref{sec:decay} we show that
the proof of Theorem~\ref{log concave theorem} gives sharp results on the asymptotic rate
of decay of log-concavity.  These are then used to prove of Chen's conjectures
(sections~\ref{s:chen} and~\ref{s:strong}), and Sun's strongly related conjecture 
(Section~\ref{sec:Sun}). We conclude with final remarks in Section~\ref{sec:final}.

\bigskip

\section{Proof of Theorem~\ref{log concave theorem}} \label{proof section}

\noindent
The log-concavity is equivalent to
\[ \log a_{n-1} \. - \. 2\log a_n \. + \. \log a_{n+1} \. \leq \. 0 \quad \ \text{for all~$n$\.,}\]
which explains the name.

Recall the Rademacher's convergent series for~$p(n)$, based on the Hardy--Ramanujan asymptotic series
\cite{R} (see also e.g.~\cite{An,Ha}). Let $\ts \mu(n) = \frac{\pi}{6}\ts \sqrt{24\ts n-1}$, which we abbreviate to~$\mu$ whenever
convenient.  We have
\begin{equation}\label{HR p(n)}
p(n) \, = \, \frac{\sqrt{12}}{24\ts n-1} \sum_{k=1}^N A_k^{\ast}(n)\left[\left( 1 - \frac{k}{\mu}\right) e^{\mu/k}+\left( 1 + \frac{k}{\mu}\right) e^{-\mu/k}\right] \. + \. R_2(n,N)\ts,
\end{equation}
where $A_k^\ast(n)$ is a complicated arithmetic function that will not be needed in its full generality for our analysis; see \cite{L1, L2} for the complete definition.  We shall only need that $A_1^\ast(n) = 1$ and $A_2^\ast(n) = (-1)^n/\sqrt{2}$ for all positive $n$.  Recall also Lehmer's error bound \cite{L1}
\begin{equation}
|R_2(n,N)| < \frac{\pi^2 N^{-2/3}}{\sqrt{3}} \left[ \left(\frac{N}{\mu}\right)^3 \sinh\frac{\mu}{N} + \frac{1}{6} - \left(\frac{N}{\mu}\right)^2\right],
\end{equation}
valid for all positive $n$ and~$N$.  
  Define for all $n\geq 1$
\begin{equation}
T(n) :=   \frac{\sqrt{12}}{24\ts n-1} \left[\left( 1 - \frac{1}{\mu}\right) e^{\mu} + \frac{(-1)^n}{\sqrt{2}}e^{\mu/2}\right].
\end{equation}
  The function $T(n)$ contains the three largest individual contributions to the sum,
  and is a refinement of the right-hand side of Equation~(\ref{HR asymptotics}).
  Define also the remainder term
\begin{equation}
R(n) :=  \frac{d}{\mu^2} \left[ \left( 1 + \frac{1}{\mu}\right) e^{-\mu} - \frac{(-1)^n}{\sqrt{2}}\frac{2}{\mu}e^{\mu/2} + \frac{(-1)^n}{\sqrt{2}}\left(1 + \frac{2}{\mu}\right)e^{-\mu/2}\right]  + R_2(n,2),
\end{equation}
where $d =  \frac{\pi^2}{6 \sqrt{3}}$\ts.  We may rewrite Equation~\eqref{HR p(n)} as
\begin{equation}
p(n) \. = \. T(n) \. + \. R(n)\ts.
\end{equation}



\begin{lemma}\label{lemma 0}{
Suppose $f(x)$ is a positive, increasing function with two continuous derivatives
for all $x> 0$, and that $f'(x)>0$ and decreasing, and $f''(x)<0$ is increasing for all $x>0$.   Then
\[  f''(x-1) < f(x+1)-2f(x) + f(x-1) < f''(x+1) \quad \text{for all \ $x>1$}\ts. \]
}\end{lemma}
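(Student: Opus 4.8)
The plan is to reduce the second difference $f(x+1)-2f(x)+f(x-1)$ to a double integral of $f''$ and then exploit the monotonicity of $f''$. Of all the stated hypotheses, only the continuity and strict monotonicity of $f''$ are actually needed for this particular estimate; the positivity and first-order conditions are inherited from the intended application but play no role here.

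First I would express the forward and backward first differences via the fundamental theorem of calculus,
\[ f(x+1)-f(x) = \int_0^1 f'(x+t)\,dt, \qquad f(x)-f(x-1) = \int_0^1 f'(x-1+t)\,dt, \]
and subtract to obtain
\[ f(x+1)-2f(x)+f(x-1) = \int_0^1 \big[\ts f'(x+t) - f'(x-1+t)\ts\big]\,dt. \]
Applying the fundamental theorem once more to the inner difference, $f'(x+t)-f'(x-1+t) = \int_0^1 f''(x-1+t+s)\,ds$, yields the key representation
\[ f(x+1)-2f(x)+f(x-1) = \int_0^1\int_0^1 f''(x-1+t+s)\,ds\,dt. \]

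Next I would bound the integrand. For $(s,t)$ in the open unit square the argument satisfies $x-1 < x-1+t+s < x+1$, so by the strict monotonicity of $f''$ we have $f''(x-1) < f''(x-1+t+s) < f''(x+1)$. Integrating these inequalities over the unit square, which has area $1$, gives precisely
\[ f''(x-1) < f(x+1)-2f(x)+f(x-1) < f''(x+1), \]
as claimed. Strictness is preserved because the integrand lies strictly interior to the bounds on a set of full measure.

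The only point requiring care is the strictness of the two inequalities: if $f''$ were merely weakly increasing the bounds could collapse to equalities, so I would read the hypothesis ``$f''$ is increasing'' as strict monotonicity, under which the integrand is strictly interior to $[f''(x-1),\ts f''(x+1)]$ off a measure-zero set. Equivalently, one may invoke the mean value theorem for integrals: since $f''$ is continuous, the double integral equals $f''(\xi)$ for some $\xi$ strictly between $x-1$ and $x+1$, and monotonicity then sandwiches $f''(\xi)$ between $f''(x-1)$ and $f''(x+1)$. I expect no genuine obstacle beyond this bookkeeping, since the representation as a double integral of $f''$ does all the work.
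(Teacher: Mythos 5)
Your proof is correct. Worth noting: the paper itself states Lemma~\ref{lemma 0} \emph{without} proof --- it is treated as a routine calculus fact and used as a black box in the proofs of Lemma~\ref{lemma T(n)} and the theorem on Sun's conjecture --- so there is no written argument to compare against; your double-integral representation
\[ f(x+1)-2f(x)+f(x-1) \;=\; \int_0^1\!\!\int_0^1 f''(x-1+t+s)\,ds\,dt \]
is exactly the standard way to fill this gap, and each step checks out: the two applications of the fundamental theorem are valid since $x>1$ keeps all arguments in $(0,\infty)$ where $f''$ exists and is continuous, and the strict inequalities follow because the continuous integrand is strictly between $f''(x-1)$ and $f''(x+1)$ off a null set. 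Your two caveats are also well taken and accurate: the hypotheses on $f$, $f'$ being positive and on the signs of $f'$, $f''$ are irrelevant to the estimate (they merely describe the functions $\mu(n)$, $\log(\mu(n)-1)$, $\log \mu(n)$, $\log x$ to which the lemma is later applied), and ``$f''$ increasing'' must indeed be read as strict monotonicity --- with a weakly increasing (e.g.\ constant) $f''$ the displayed inequalities degrade to equalities, so the strict form of the lemma genuinely needs strictness, which holds in all of the paper's applications. The alternative you mention, quoting the mean value theorem for the second difference to write the left side as $f''(\xi)$ with $\xi\in(x-1,x+1)$, is an equally valid one-line route; your integral version has the minor advantage of making the strictness bookkeeping transparent rather than relying on the interior location of $\xi$.
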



\begin{lemma}\label{lemma T(n)} Let
\[T_1(n) :=2 \ts \log \ts T(n) \. - \. \log \ts T(n-1) \. - \. \log \ts T(n+1).\]
Then, for all \ts $n \geq 50$, we have
\ignore{\begin{equation}
\frac{24\ts \pi}{(24(n+1)-1)^{3/2}} \. - \. \frac{4 \times 288}{(24(n-1)-1)^2} <  T_1(n) \, < \, \frac{24\pi}{(24(n-1)-1)^{3/2}} \. + \. \frac{4 \times 288}{(24(n+1)-1)^2} \..
\end{equation}}
\begin{equation}\label{T bound}
\frac{24\ts \pi}{(24(n+1)-1)^{3/2}} \. - \. \frac{3}{n^2} <  T_1(n) \, < \, \frac{24\ts \pi}{(24(n-1)-1)^{3/2}} + e^{-C \ts \sqrt{n} \ts /\ts 10},
\end{equation}
where $C = \pi\ts \sqrt{\frac{2}{3}}$.
\end{lemma}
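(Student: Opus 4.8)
The plan is to isolate the genuinely smooth, dominant part of $\log T(n)$ and treat everything else as a controlled perturbation. Writing $\mu = \mu(x) = \frac{\pi}{6}\sqrt{24x-1}$ as a function of a real variable, I would set
\[
f(x) \, := \, \log\!\left[\frac{\sqrt{12}}{24x-1}\left(1-\frac{1}{\mu}\right)e^{\mu}\right],
\qquad
\delta(n) \, := \, \log\!\left(1 + \frac{(-1)^n}{\sqrt{2}\,(1-1/\mu)}\,e^{-\mu/2}\right),
\]
so that $\log T(n) = f(n) + \delta(n)$ exactly. Then, writing $\Delta^2 g(n) := g(n+1)-2g(n)+g(n-1)$ for the centered second difference, we have
\[
T_1(n) \, = \, -\,\Delta^2 f(n) \, - \, \Delta^2\delta(n).
\]
Everything reduces to bounding these two second differences, the first smooth and the second exponentially small.

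For the smooth part I would decompose $f(x) = \mu(x) + r(x)$, where $r(x) = \log\sqrt{12} - \log(24x-1) + \log(1-1/\mu)$ collects the lower-order terms. A direct computation gives $\mu''(x) = -24\pi\,(24x-1)^{-3/2}$, and $\mu$ satisfies all the hypotheses of Lemma~\ref{lemma 0} (positive, increasing, with $\mu'>0$ decreasing and $\mu''<0$ increasing). Lemma~\ref{lemma 0} therefore yields $\mu''(x-1) < \Delta^2\mu(x) < \mu''(x+1)$, which upon negation produces exactly the two main terms
\[
\frac{24\pi}{(24(n+1)-1)^{3/2}} \, < \, -\Delta^2\mu(n) \, < \, \frac{24\pi}{(24(n-1)-1)^{3/2}}.
\]
It then remains to show that $\Delta^2 r$ and $\Delta^2\delta$ contribute only the stated error terms.

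The key structural observation is that the two error terms are asymmetric for a reason. Applying Lemma~\ref{lemma 0} to $\log(24x-1)$ shows that $\Delta^2\bigl[-\log(24x-1)\bigr]$ is positive and of size $576\,(24n)^{-2}\approx n^{-2}$, while the remaining piece $\log(1-1/\mu)$ contributes only $O(n^{-5/2})$ (estimated through $\Delta^2 g(n)=g''(\xi)$ for some $\xi\in(n-1,n+1)$ together with a crude bound on the relevant second derivative). Hence $\Delta^2 r(n)>0$ for all $n\geq 50$, so $-\Delta^2 r(n)<0$ may simply be discarded in the \emph{upper} bound, whereas in the \emph{lower} bound it costs us a term bounded below by $-576\,(24(n-1)-1)^{-2}$, which I would check is $>-3/n^2$ once the small $n^{-5/2}$ correction is absorbed, leaving ample slack for $n\ge 50$. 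Finally, since $\mu(n)/2 = \tfrac{C}{2}\sqrt{n}\,(1+o(1))$, the alternating correction satisfies $|\Delta^2\delta(n)| \ll e^{-\mu(n-1)/2} < e^{-C\sqrt{n}/10}$ with enormous margin, which supplies the exponential error term in the upper bound (and is negligible against $n^{-2}$ in the lower bound).

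The main obstacle will be the bookkeeping needed to certify all of these inequalities \emph{uniformly} from $n=50$ onward rather than merely asymptotically: verifying the monotonicity and convexity hypotheses of Lemma~\ref{lemma 0} for each smooth piece, pinning down the sign and size of $\Delta^2 r$ (in particular confirming $\Delta^2 r>0$ and $-\Delta^2 r > -3/n^2$ including the $\log(1-1/\mu)$ contribution), and controlling the alternating second difference $\Delta^2\delta$ despite its oscillating sign. None of these steps is deep, but each requires explicit, $n$-effective estimates, and the comparison of the various polynomial and exponential error scales must be done carefully at the threshold $n=50$.
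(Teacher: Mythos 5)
Your proposal is correct and follows essentially the same route as the paper: the paper likewise writes $\log T(n)$ as a smooth part (equivalent to your $f$, grouped as $\log d + \mu + \log(\mu-1) - 3\log\mu$) plus the oscillating term $\log(1+x_n)$ with $x_n = O(e^{-\mu/2})$, applies Lemma~\ref{lemma 0} to the smooth pieces to extract the main terms $24\pi(24(n\pm1)-1)^{-3/2}$ together with the $O(n^{-2})$ logarithmic corrections, and bounds the second difference of the oscillating term by $e^{-C\sqrt{n}/10}$. Your only deviations --- regrouping the lower-order logs as $-\log(24x-1)+\log(1-1/\mu)$ and using a mean-value estimate for the $O(n^{-5/2})$ piece --- are cosmetic, and your explanation of the asymmetry of the two error terms (discarding $-\Delta^2 r<0$ in the upper bound, absorbing it into $-3/n^2$ in the lower bound) matches what the paper's ``this simplifies to'' computation actually does.
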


\begin{proof}
We prove a stronger result, that for all $n\geq 2$, 
    we have
\[\log T(n) \, = \, \log d \. +\. \mu \. + \. \log(\mu-1) \. - \. 3 \log \mu \.
+ \. \log\left(1 + \frac{(-1)^n e^{-\mu/2} \mu^3}{d \sqrt{2} (\mu-1)}\right).\]
  By Lemma \ref{lemma 0},
\begin{equation}\label{lower bound} T_1(n) >  -\mu''(n+1) - (\log(\mu(n+1)-1))'' + 3 \log(\mu(n-1))''+ G(n),\end{equation}
where $G(n)$ is the expression with the more complicated logs.  
  This simplifies to
\[ T_1(n) \, > \, \frac{24\pi}{(24(n+1)-1)^{3/2}} \, + \, \frac{288\pi(-3+\pi\sqrt{24(n+1)-1})}{(24(n+1)-1)^{3/2}(-6+\pi\sqrt{24(n+1)-1})^2} \]
\[ - \ \frac{3 \times 288}{(24(n-1)-1)^2} \, + \, G(n)\.,\] 
valid for all $n\geq 2$.  Let $\ts x_n =  \frac{(-1)^n e^{-\mu/2} \mu^3}{d \sqrt{2} (\mu-1)}$.
Using $\ts \log(1-|x|) \leq -\log(1-|x|)$ for all $|x|<1$, and $\log(1-x) \geq -x / (1-x)$ for $0<x<1$, we have
\[ G(n) \. > \. 4 \ts \log \left( 1-|x_{n+1}|\right) \. > \.
\frac{-4|x_{n+1}|}{1-|x_{n+1}|} \. > \. -e^{-C \ts \sqrt{n} \ts / \ts 10} \ts , \quad \text{for all $\ts n \geq 50$}\ts.\]
For the upper bound, similarly note that
\[ G(n) \. < \. -4 \log( 1 - |x_{n-1}|) \. < \. \frac{4|x_{n-1}|}{1-|x_{n-1}|} \. < \. e^{-C \ts \sqrt{n} \ts / \ts 10},  \quad \text{for all $\ts n \geq 50$}\ts, \]
as desired.
\end{proof}

\begin{lemma}\label{log rate} Let $y_n = |R(n)|/T(n)$.  Then
\begin{equation}\label{log equation} \log \left[ \frac{(1-y_n)(1-y_n)}{(1+y_{n-1})(1+y_{n+1})}\right]
\. > \. -e^{-C \ts \sqrt{n} \ts / \ts 10}, \quad \text{for all $\ts n\geq 10$}\ts.
\end{equation}
\end{lemma}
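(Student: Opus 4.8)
The role of this lemma is to control the contribution of the remainder $R(n)$ to the log-concavity of $p(n)$. Writing $p(n) = T(n)\bigl(1 + R(n)/T(n)\bigr)$ and recalling that $|R(n)/T(n)| = y_n$, the quantity $2\log p(n) - \log p(n-1) - \log p(n+1)$ splits as $T_1(n)$, handled in Lemma~\ref{lemma T(n)}, plus an error term. Since $\log$ is increasing and $1 + R(n)/T(n) \geq 1 - y_n$ while $1 + R(n\pm 1)/T(n\pm 1) \leq 1 + y_{n\pm 1}$, that error term is bounded below by exactly the left-hand side of~\eqref{log equation}. The plan is therefore to show that this quantity is negligible: the key point is that $y_n$ decays like $e^{-\mu/2} \approx e^{-C\sqrt n/2}$, which is far smaller than the target $e^{-C\sqrt n/10}$, and the generous gap between the exponents $1/2$ and $1/10$ leaves room to absorb all polynomial prefactors.

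First I would establish an upper bound $y_n \leq K'\, e^{-\mu/2}/\sqrt{24n-1}$ with an explicit constant $K'$. For the denominator, the term $(1 - 1/\mu)e^\mu$ dominates $T(n)$, so a clean lower bound of the form $T(n) \geq c\,\frac{\sqrt{12}}{24n-1}\, e^\mu$ follows once $e^{\mu/2}/\sqrt 2$ is shown to be a small fraction of $(1-1/\mu)e^\mu$, which holds for all $n \geq 10$. For the numerator I would apply the triangle inequality to the three explicit terms defining $R(n)$, noting that the $\frac{2}{\mu}e^{\mu/2}$ term dominates and contributes $O(e^{\mu/2}/\mu^3)$, and then bound $R_2(n,2)$ via Lehmer's estimate, using $\sinh(\mu/2) < \tfrac12 e^{\mu/2}$ to see that it too is $O(e^{\mu/2}/\mu^3)$. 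Combining gives $|R(n)| \leq K\, e^{\mu/2}/\mu^3$, whence, using $\mu = \frac{\pi}{6}\sqrt{24n-1}$ to rewrite $(24n-1)/\mu^3$, one obtains the stated bound on $y_n$.

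With this in hand the log expression is controlled by elementary inequalities: since $\log(1-y) \geq -y/(1-y)$ for $0<y<1$ and $\log(1+y) \leq y$, the left-hand side of~\eqref{log equation} is at least $-\tfrac{2y_n}{1-y_n} - y_{n-1} - y_{n+1}$. For $n \geq 10$ each $y_j$ is well below $\tfrac12$, so $1 - y_n > \tfrac12$ and the whole expression exceeds $-6\max\{y_{n-1},y_n,y_{n+1}\} \geq -6K'\,e^{-\mu(n-1)/2}/\sqrt{24(n-1)-1}$. Finally, writing $\mu > C\sqrt n\,(1 - 1/(24n))$ and comparing exponents, the desired inequality reduces after taking logarithms to checking that $\tfrac{C\sqrt n}{10} - \tfrac{C\sqrt{n-1}}{2}$, which is $\approx -\tfrac{2}{5}C\sqrt n$, stays below $\tfrac12\log(24(n-1)-1) - \log(6K')$ for all $n \geq 10$; the left side is strongly negative while the right side is increasing, so this holds with a wide margin.

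The main obstacle is the bookkeeping in the second step: producing honest, explicit constants for both the lower bound on $T(n)$ and the upper bound on $|R(n)|$ — in particular correctly handling the alternating sign $(-1)^n$ through the triangle inequality and bounding the Lehmer remainder $R_2(n,2)$ together with its $\sinh$ term — so that the resulting estimate is valid all the way down to $n = 10$ rather than merely for asymptotically large $n$. Once the constant $K'$ is pinned down, the comparison of exponents in the last step is routine precisely because of the large gap between $e^{-\mu/2}$ and $e^{-C\sqrt n/10}$.
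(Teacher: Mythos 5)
Your proposal is correct and is essentially the paper's own argument: the paper likewise bounds $|R(n)| < 1 + \frac{16}{\mu^3}\,e^{\mu/2}$ by the triangle inequality on the explicit terms plus Lehmer's estimate with $N=2$, deduces that $y_n = |R(n)|/T(n)$ is exponentially small, and then compresses your final step (the elementary inequalities for $\log(1\pm y)$ and the comparison of exponents) into ``the result easily follows.'' Your observation that one must carry the true $e^{-\mu/2}$ decay of $y_n$ through the last step---rather than just the crude bound $y_n < e^{-C\sqrt{n}/10}$---is precisely the detail that makes the stated inequality go through, so the proposal matches the intended proof.
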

\begin{proof}
Simply note that for all \ts $n\geq 2$, we have
\begin{equation}
|R(n)| \. < \. 1 + \frac{16}{\mu^3} \ts e^{\mu/2}\.,
\end{equation}
and
\begin{equation}
0 \. < \. \frac{|R(n)|}{T(n)} \. < \. e^{-C \ts \sqrt{n} \ts / \ts 10}\..
\end{equation}
The result easily follows. \end{proof}

\begin{prop}\label{almost proof prop}
Let $\ts p_2(n) = 2\log p(n) - \log p(n-1) - \log p(n+1)$.  Then,
for all $\ts n \geq 2600$, we have
\begin{equation}\label{p bound}
\frac{1}{(24\ts n)^{3/2}} \. < \. p_2(n) \. < \. \frac{2}{n^{3/2}}\..
\end{equation}
\end{prop}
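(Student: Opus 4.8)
The plan is to exploit the multiplicative splitting $p(n) = T(n)\bigl(1 + R(n)/T(n)\bigr)$ coming from the decomposition $p(n) = T(n) + R(n)$. Writing $r_n := R(n)/T(n)$ and taking logarithms gives $\log p(n) = \log T(n) + \log(1 + r_n)$, so that $p_2(n)$ separates cleanly as
\[
p_2(n) \, = \, T_1(n) \, + \, D(n), \qquad D(n) := 2\log(1+r_n) - \log(1+r_{n-1}) - \log(1+r_{n+1}).
\]
The first piece $T_1(n)$ is already controlled by Lemma~\ref{lemma T(n)}, whose bounds in \eqref{T bound} have a main term of order $n^{-3/2}$, so the whole proposition reduces to showing that the correction $D(n)$ is negligible compared with that main term.

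To bound $D(n)$ I would use that $|r_n| \le y_n$, where $y_n = |R(n)|/T(n)$. Since $\log$ is increasing, replacing each $r_n$ by the extreme admissible value that minimizes $D(n)$ (namely $r_n \mapsto -y_n$ and $r_{n\pm1}\mapsto +y_{n\pm1}$) gives
\[
D(n) \, \ge \, 2\log(1 - y_n) - \log(1 + y_{n-1}) - \log(1 + y_{n+1}),
\]
which is exactly the quantity bounded below by $-e^{-C\sqrt{n}/10}$ in Lemma~\ref{log rate}. The reverse choice yields the matching upper bound $D(n) \le 2\log(1 + y_n) - \log(1 - y_{n-1}) - \log(1 - y_{n+1})$, and since the proof of Lemma~\ref{log rate} establishes $0 < y_n < e^{-C\sqrt{n}/10}$, the elementary estimates $\log(1+t) \le t$ and $-\log(1-t) \le t/(1-t)$ make this upper bound likewise of size $O(e^{-C\sqrt{n}/10})$. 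Hence $|D(n)|$ is exponentially small.

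Combining this with \eqref{T bound} then gives
\[
\frac{24\pi}{(24(n+1)-1)^{3/2}} - \frac{3}{n^2} - e^{-C\sqrt{n}/10} \, < \, p_2(n) \, < \, \frac{24\pi}{(24(n-1)-1)^{3/2}} + 2e^{-C\sqrt{n}/10},
\]
and the proof reduces to two scalar inequalities in $n$. The upper bound is comfortable: the main term $24\pi\,(24(n-1)-1)^{-3/2} \sim \pi\,(24)^{-1/2}\,n^{-3/2} \approx 0.64\,n^{-3/2}$ sits far below the target $2\,n^{-3/2}$, and the exponential correction only helps the argument stay small.

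The lower bound is the crux and it is what dictates the threshold $n \ge 2600$. Near that value the exponential $e^{-C\sqrt{n}/10}$ (with $C = \pi\sqrt{2/3}$) is still of the \emph{same} order of magnitude (about $2\times 10^{-6}$) as the main term $24\pi\,(24(n+1)-1)^{-3/2}\approx 0.64\,n^{-3/2}$, so the main term does not trivially dominate and one must verify that it survives subtraction of both the $O(n^{-2})$ term and the exponential term while remaining above the small target $(24n)^{-3/2} \approx 0.0085\,n^{-3/2}$. The hard part is therefore precisely this boundary verification at $n = 2600$ — everything to its left being handled by the direct {\sc Mathematica} computation already used in the proof of Theorem~\ref{log concave theorem} — after which the $n^{-3/2}$ main term clearly wins for all larger $n$, since the $n^{-2}$ and exponential corrections decay strictly faster.
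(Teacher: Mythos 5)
Your proposal is correct and takes essentially the same route as the paper: the same multiplicative splitting $p(n) = T(n)\bigl(1 \pm y_n\bigr)$, logs controlled by Lemma~\ref{lemma T(n)} and Lemma~\ref{log rate}, arriving at (a slightly sharpened version of) the paper's intermediate bound \eqref{more explicit p bound}, and concluding with the numerical verification near $n = 2600$. You simply spell out what the paper compresses into ``apply the Lemmas'' and ``the result easily follows by a direct calculation,'' namely the symmetric upper-bound estimate on the logarithmic correction and the boundary check at the threshold.
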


\begin{proof}
We start by writing a double strict inequality for~$p(n)$,
\[ T(n)\ts \left( 1 - \frac{|R(n)|}{T(n)}\right) \. < \. p(n) \. < \. T(n)\ts \left( 1 + \frac{|R(n)|}{T(n)}\right),
\quad \text{for all \ $n\geq 1$}\ts .\]
We then take logs and apply the Lemmas, and obtain
\begin{equation}\label{more explicit p bound}
\frac{24\ts\pi}{(24(n+1)-1)^{3/2}} \. - \. \frac{3}{n^2} \. - \. 2\ts e^{-C \ts \sqrt{n} \ts / \ts 10}
\, < \, p_2(n) \, < \, \frac{24\ts\pi}{(24(n-1)-1)^{3/2}} \. + \. 2\ts e^{-C \ts  \sqrt{n}\ts / \ts 10}\..
\end{equation}
Now the result easily follows by a direct calculation.
\end{proof}

\bigskip

\section{Rate of decay}\label{sec:decay}

\noindent
The rate $O(n^{3/2})$ in Lemma \ref{lemma T(n)} is asymptotically sharp.
Based on the asymptotics of~$T(n)$, we have the following corollary.

\begin{cor}\label{rate theorem}
Let $D(n) = 2 \log p(n) - \log p(n-1) - \log p(n+1)$.   We have
\[\lim_{n\to\infty} \. D(n) \. \frac{4\ts n^{3/2}}{C} \, = \. 1, \quad \text{where} \ \ C = \pi \sqrt{\frac{2}{3}}\..\]
\end{cor}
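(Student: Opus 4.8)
The plan is to extract the precise asymptotic rate from the explicit two-sided bound already established in Proposition~\ref{almost proof prop} and its sharper predecessor \eqref{more explicit p bound}. Since $D(n)$ is exactly the quantity $p_2(n)$ appearing in that proposition, I would start from the inequality
\[
\frac{24\ts\pi}{(24(n+1)-1)^{3/2}} - \frac{3}{n^2} - 2\ts e^{-C\sqrt{n}/10}
\, < \, D(n) \, <\,
\frac{24\ts\pi}{(24(n-1)-1)^{3/2}} + 2\ts e^{-C\sqrt{n}/10},
\]
valid for all $n\geq 2600$, and multiply through by $4\ts n^{3/2}/C$. The strategy is then purely a matter of computing the limit of each side.

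The key steps are as follows. First I would identify the dominant term on each side: as $n\to\infty$,
\[
\frac{24\ts\pi}{(24(n\pm 1)-1)^{3/2}} \, = \, \frac{24\ts\pi}{(24\ts n)^{3/2}}\,\bigl(1 + O(1/n)\bigr)
\, = \, \frac{24\ts\pi}{24^{3/2}\,n^{3/2}}\,\bigl(1 + O(1/n)\bigr).
\]
Multiplying by $4\ts n^{3/2}/C$ with $C = \pi\sqrt{2/3}$, the $n^{3/2}$ factors cancel and I am left with the constant $\tfrac{4}{C}\cdot\tfrac{24\ts\pi}{24^{3/2}} = \tfrac{96\ts\pi}{C\,24^{3/2}}$. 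Since $24^{3/2} = 24\sqrt{24} = 48\sqrt 6$ and $C=\pi\sqrt{2/3}=\pi\sqrt2/\sqrt3$, one checks that $\tfrac{96\ts\pi}{48\sqrt6\cdot\pi\sqrt2/\sqrt3} = \tfrac{96}{48\sqrt6}\cdot\tfrac{\sqrt3}{\sqrt2} = \tfrac{2}{\sqrt6}\cdot\tfrac{\sqrt3}{\sqrt2} = \tfrac{2\sqrt3}{\sqrt{12}} = \tfrac{2\sqrt3}{2\sqrt3} = 1$, so each dominant term contributes exactly $1$ in the limit. Second, I would confirm that the error terms vanish after scaling: the term $\tfrac{3}{n^2}$ contributes $O(n^{-1/2})\to 0$, and $2\ts e^{-C\sqrt n/10}$ multiplied by $4\ts n^{3/2}/C$ still tends to $0$ because exponential decay dominates any polynomial growth. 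Third, the $O(1/n)$ correction inside the dominant terms contributes $O(n^{-1})\to 0$ after the same scaling. Hence both the upper and lower bounds for $D(n)\cdot 4\ts n^{3/2}/C$ converge to $1$, and the squeeze theorem yields the claim.

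The only genuine content is the constant-matching computation verifying that $\tfrac{4}{C}\cdot\tfrac{24\ts\pi}{24^{3/2}}=1$; everything else is a routine application of the squeeze theorem to bounds already in hand. I do not expect a real obstacle, since the hard analytic work — converting the Rademacher series and Lehmer's error bound into the explicit two-sided estimate \eqref{more explicit p bound} — was carried out in the previous section. The corollary is essentially a reading-off of the leading asymptotics, so the main care needed is bookkeeping: making sure every discarded term is genuinely $o(1)$ after multiplication by $n^{3/2}$, and that the symmetric $(n+1)$ versus $(n-1)$ shifts in the two bounds both collapse to the same leading constant.
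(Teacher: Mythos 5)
Your proposal is correct, and it is essentially the argument the paper intends: the corollary is stated as a direct consequence of the two-sided bounds just established (the paper leaves the squeeze implicit, remarking only that the rate in Lemma~\ref{lemma T(n)} is sharp and comes from the second-derivative term of $\mu \sim 2c\sqrt{n}$), and your explicit squeeze from~\eqref{more explicit p bound}, including the constant check $\frac{4}{C}\cdot\frac{24\pi}{24^{3/2}}=1$, fills in exactly that reading-off step.
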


A plot of exact values for $n$ up to 2000 is given in Figure \ref{rate figure}.  This asymptotic rate comes from the Taylor Series of $\mu \sim 2c\sqrt{n}$ starting with the second derivative term.
\begin{figure}
\includegraphics[height=5cm]{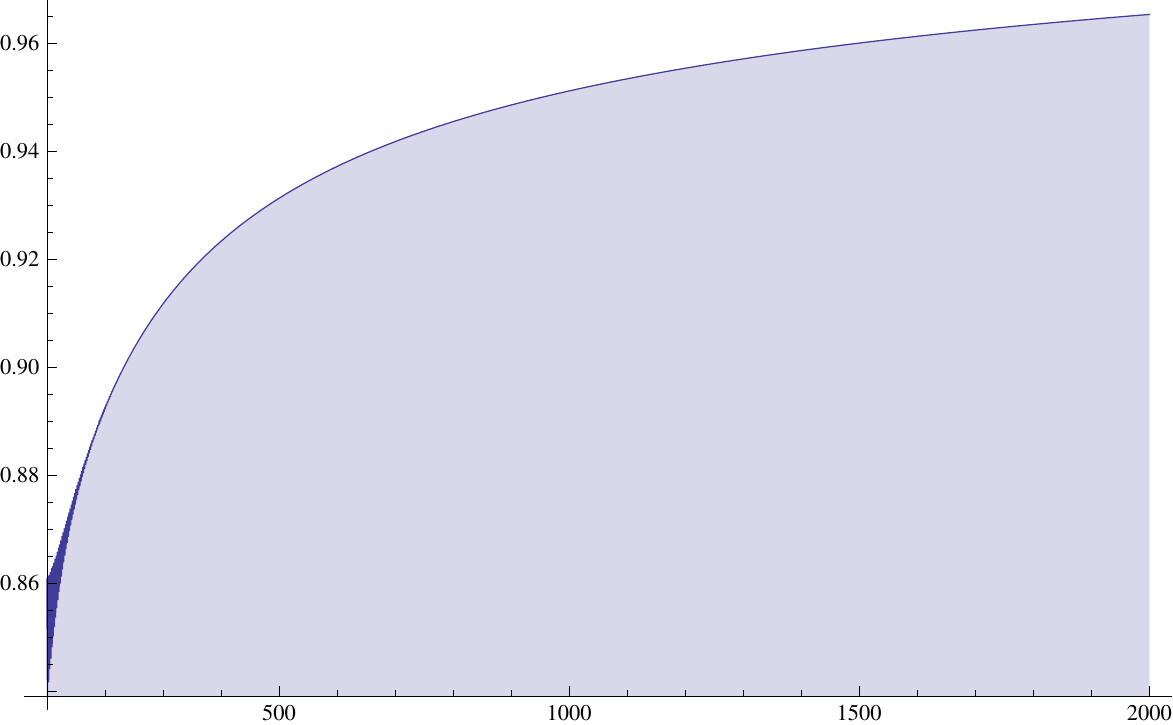}
\caption{A plot of \. $n^{\frac32}\ts D(n)/\bigl(\frac{\pi}{\sqrt{24}}\bigr)$,  for \ts $n=1,\ldots, 2000$.}
\label{rate figure}\end{figure}

A refinement of the rate in Theorem \ref{rate theorem} to higher order terms follows easily as long as we also include the higher order terms of $-2\log(\mu)$.  Since $(2\log(\mu))'' \sim O(n^{-2})$, these terms did not appear in the first-order asymptotic expansion, but higher order expansions must take them into account.  


Let
\[h_1(x) \. = \. \frac{4}{C\ts x^{3/2}}\quad \ \, \text{and}  \quad h_2(x) \. =  \. \frac{-288}{(-1+24 x)^2}\.. \]
Fix an integer $k\ge 0$.  Denote by $L_k^+(n)$ the first $k$ even terms in the Taylor series expansion of $h_1(n+1)+h_2(n+1)$ evaluated at $x_0 = n$, $k=0,1,\ldots$.  Similarly, denote by $L_k^-(n)$ the first $k$ even terms in the Taylor series expansion of $h_1(n+1)$ evaluated at $x_0 = n$, plus the first $k-1$ even terms in the Taylor series expansion of $h_2(n+1)$ evaluated at $x_0 = n$, $k=1,2,\ldots$, with $L_0^-(n) = h_1(n)$.

\begin{cor}   For every integer \ts $k\ge 0$, we have
\[ D(n) \. \sim \. L_k^+(n)\ts, \quad D(n) \. \sim \. L_k^-(n)\quad \text{as \,$n\to \infty$}\ts. \]
\end{cor}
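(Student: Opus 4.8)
The result should follow by pushing the analysis of Lemma~\ref{lemma T(n)} and Corollary~\ref{rate theorem} to all orders; the one new feature is that the logarithmic terms, invisible at first order, must now be retained. First I would reduce $D(n)$ to a centered second difference of an explicit smooth function. By Lemma~\ref{log rate} we have $|R(n)|/T(n) = O(e^{-C\sqrt{n}/10})$, whence $\log p(n) = \log T(n) + O(e^{-C\sqrt{n}/10})$ and therefore
\[ D(n) \. = \. T_1(n) \. + \. O\bigl(e^{-C\sqrt{n}/10}\bigr). \]
Feeding in the closed form $\log T(n) = \log d + \mu + \log(\mu-1) - 3\log\mu + \log(1+x_n)$ from the proof of Lemma~\ref{lemma T(n)} and discarding the exponentially small $\log(1+x_n)$ (and the constant $\log d$, which dies in a second difference), this becomes
\[ D(n) \. = \. -\Delta^2\phi(n) \. + \. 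O\bigl(e^{-C\sqrt{n}/10}\bigr), \qquad \phi(x) = \mu(x) + \log(\mu(x)-1) - 3\log\mu(x), \]
where $\Delta^2 f(n) = f(n+1) - 2f(n) + f(n-1)$. Since no asymptotic expansion in powers of $n$ sees an exponentially small error, it suffices to expand $-\Delta^2\phi(n)$.

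Next I would isolate the even-order structure. Applying Lemma~\ref{lemma 0} summand by summand brackets $-\Delta^2\phi(n)$ between the values of the explicit second derivatives shifted to $n+1$ and to $n-1$; Taylor expanding these about $x_0 = n$, the odd powers of the unit shift carry opposite signs in the two bounds while the even powers agree, which is exactly why only even terms enter $L_k^{\pm}$. The dominant summand is $-\mu''(x) = 24\pi(24x-1)^{-3/2}$, already visible as the leading term of \eqref{T bound}, whose role is played by $h_1$ at the scale $x^{-3/2}$; the logarithmic summand $-\bigl(\log(\mu-1) - 3\log\mu\bigr)'' = (2\log\mu)'' + O(x^{-5/2})$ enters only at the scale $x^{-2}$ and is played by $h_2$. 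Because the successive even terms of $h_1$ sit at the scales $x^{-3/2-2j}$ and those of $h_2$ at $x^{-2-2j}$, reaching a prescribed order requires one more even term of $h_1$ than of $h_2$ -- precisely the asymmetric truncation built into $L_k^{+}$ and $L_k^{-}$.

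Finally I would assemble the sandwich and conclude. The term-by-term bracketing of Lemma~\ref{lemma 0}, together with the exponentially small remainder, places $D(n)$ between $L_k^{-}(n)$ and $L_k^{+}(n)$ for all large $n$. These differ only by the first even term of $h_2$ that $L_k^{-}$ omits, of order $n^{-2k}$, which for $k\ge 1$ is $o(n^{-3/2})$; since both $L_k^{\pm}(n)$ retain the leading term $h_1(n) \sim \tfrac{C}{4}n^{-3/2}$, we get $D(n)/L_k^{\pm}(n) \to 1$, i.e.\ $D(n) \sim L_k^{\pm}(n)$. The base case $L_0^{-}(n) = h_1(n)$ is exactly Corollary~\ref{rate theorem}.

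The main obstacle is the bookkeeping of the middle step: one must check that the even-order Taylor coefficients of the bracketing expressions -- which carry mixed $\pm 1$ shifts on the $\mu$, $\log(\mu-1)$ and $\log\mu$ pieces according to their differing convexity in Lemma~\ref{lemma 0} -- agree to the required order with the even terms of $h_1$ and $h_2$, and that the $O(e^{-C\sqrt{n}/10})$ error is dominated by the last retained term uniformly in $n$. The interleaving of the $x^{-3/2}$-series $h_1$ with the $x^{-2}$-series $h_2$ is what makes this delicate, but it is otherwise routine.
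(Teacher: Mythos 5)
Your proposal is correct and follows essentially the route the paper intends: the paper offers no separate proof of this corollary, asserting only that it ``follows easily'' from the Taylor-expansion/sandwich analysis behind Lemma~\ref{lemma T(n)} and Corollary~\ref{rate theorem} once the higher-order terms of $-2\log\mu$ are retained, which is exactly what you carry out (and, as your last step observes, the literal $\sim$ statement already reduces to Corollary~\ref{rate theorem} since every additional term is $o(n^{-3/2})$). Note also that you silently use $h_1(n)\sim \frac{C}{4}\,n^{-3/2}$, which is what Corollary~\ref{rate theorem} requires, whereas the paper's printed definition $h_1(x)=4/(C\,x^{3/2})$ carries the reciprocal constant --- an apparent typo in the paper rather than a flaw in your argument.
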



\bigskip

\section{Reverse inequality}\label{s:chen}

\noindent
In this section, we establish and further strengthen Conjecture~\ref{conj1}.

\begin{thm}[formerly Chen's Conjecture~\ref{conj1}]\label{t:1}  For all \ts $n>1$, we have
\begin{equation}\label{eq:rev}
\frac{p(n-1)}{p(n)}\left(1 + \frac{1}{n}\right) \,>\, \frac{p(n)}{p(n+1)}\,.\end{equation}
\end{thm}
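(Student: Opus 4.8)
The plan is to take logarithms and reduce the claim to the upper bound on $p_2(n)$ already established in Proposition~\ref{almost proof prop}. Since $p(n)>0$, inequality~\eqref{eq:rev} is equivalent, after dividing both sides by $p(n)/p(n+1)$, to
\[
1 + \frac1n \. > \. \frac{p(n)^2}{p(n-1)\ts p(n+1)}\ts.
\]
Taking logarithms, this is exactly $p_2(n) < \log\bigl(1 + \tfrac1n\bigr)$, where $p_2(n) = 2\log p(n) - \log p(n-1) - \log p(n+1)$ as in Proposition~\ref{almost proof prop}. Thus Chen's conjecture asserts precisely that the (already nonnegative, by log-concavity) quantity $p_2(n)$ does not exceed $\log(1+1/n)$.

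First I would apply the upper estimate of Proposition~\ref{almost proof prop}, namely $p_2(n) < 2\ts n^{-3/2}$ for all $n\ge 2600$. It then suffices to verify the purely elementary inequality $2\ts n^{-3/2} \le \log\bigl(1+\tfrac1n\bigr)$ on the same range. Using $\log(1+x) > x - \tfrac{x^2}{2}$ for $x>0$ with $x=1/n$ gives $\log(1+\tfrac1n) > \tfrac1n - \tfrac1{2n^2}$, so it is enough to show $2\ts n^{-3/2} \le \tfrac1n - \tfrac1{2n^2}$, i.e.\ $2\sqrt{n} + \tfrac12 \le n$ after clearing denominators. This holds with an enormous margin for every $n\ge 2600$, establishing~\eqref{eq:rev} for all such $n$. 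The point worth emphasizing is that the decay $p_2(n)=O(n^{-3/2})$ from Proposition~\ref{almost proof prop} is an order of magnitude faster than the $1/n$ slack offered by Chen's factor $(1+1/n)$, so the asymptotic regime settles the inequality comfortably.

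The remaining finite range $2 \le n \le 2599$ I would dispatch by a direct computation in {\sc Mathematica}, evaluating $p(n)$ exactly and checking~\eqref{eq:rev} term by term, exactly as was done for the small cases of Theorem~\ref{log concave theorem}. The only real obstacle is conceptual rather than technical: one must recognize that, unlike log-concavity (which needed both bounds of Proposition~\ref{almost proof prop}), here only the upper bound on $p_2(n)$ is relevant, and that the cutoff $n\ge 2600$ is harmless because the elementary comparison above is valid far below it while the finitely many smaller cases are handled computationally. This same $n^{-3/2}$ decay is what allows the subsequent sharpening of the factor $(1+1/n)$ to $(1+O(n^{-3/2}))$ in Theorem~\ref{conjecture 1}.
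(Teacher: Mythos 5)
Your proposal is correct and follows essentially the same route as the paper: reduce \eqref{eq:rev} to $p_2(n) < \log(1+\tfrac1n)$, invoke the upper bound $p_2(n) < 2\ts n^{-3/2}$ from Proposition~\ref{almost proof prop} for $n\ge 2600$, and verify the finitely many smaller cases by direct computation. The only (immaterial) difference is the elementary comparison used in between --- you bound $\log(1+\tfrac1n)$ below by $\tfrac1n - \tfrac1{2n^2}$, while the paper uses $\tfrac1{n+1} < \log(1+\tfrac1n)$.
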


\begin{proof}
We can rewrite Equation~\eqref{eq:rev}  as
\[ p_2(n) \. < \. \log\left(1 + \frac{1}{n}\right). \]
By Proposition \ref{almost proof prop}, for all \. $n\geq 2600$ we have
\[ p_2(n) \. < \. \frac{2}{n^{3/2}} \. < \. \frac{1}{n+1} \. < \.
\log\left(1 + \frac{1}{n}\right). \]
We then check numerically in {\sc Mathematica} that
Equation~\eqref{conjecture 1 eq} holds for all positive $n< 2600$.
\end{proof}

We next strengthen Theorem~\ref{conjecture 1} to the correct rate.

\begin{thm}\label{conjecture 1} For all \ts $n> 6$, we have
\begin{equation}\label{conjecture 1 eq}
\frac{p(n-1)}{p(n)}\left(1 + \frac{240}{(24n)^{3/2}}\right) \, > \, \frac{p(n)}{p(n+1)}\..\end{equation}
\end{thm}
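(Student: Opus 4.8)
The plan is to mirror the proof of Theorem~\ref{t:1}, but track the leading constant carefully rather than discarding it. Cross-multiplying and taking logarithms, inequality~\eqref{conjecture 1 eq} is equivalent to
\[
p_2(n) \, < \, \log\!\left(1 + \frac{240}{(24\ts n)^{3/2}}\right),
\]
so it suffices to bound $p_2(n)$ from above and the right-hand logarithm from below, and then compare the two. The main work happens for large $n$, where Proposition~\ref{almost proof prop} supplies an explicit estimate; the remaining finitely many cases are checked by machine.

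For $n\geq 2600$ I would invoke the sharp upper bound already recorded inside the proof of Proposition~\ref{almost proof prop}, namely
\[
p_2(n) \, < \, \frac{24\ts\pi}{(24(n-1)-1)^{3/2}} \. + \. 2\ts e^{-C\ts\sqrt{n}\ts/\ts 10}
\]
from~\eqref{more explicit p bound}. For the logarithm I would use the elementary lower bound $\log(1+x) > x - x^2/2$ valid for $x>0$, applied with $x = 240/(24\ts n)^{3/2}$, whose leading term simplifies to
\[
\frac{240}{(24\ts n)^{3/2}} \, = \, \frac{5}{\sqrt{6}\;n^{3/2}}\ts,
\]
with a correction of order $O(n^{-3})$.

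The heart of the argument is then a comparison of leading constants. Rewriting the dominant term of the upper bound as $24\pi/(24\ts n)^{3/2} = \pi/(2\sqrt{6}\;n^{3/2})$, the two leading coefficients compare as $\tfrac{\pi}{2\sqrt 6} < \tfrac{5}{\sqrt 6}$, i.e.\ simply $\pi < 10$, an extremely generous margin. The obstacle is purely the bookkeeping of lower-order discrepancies: replacing $(24(n-1)-1)^{3/2}$ by $(24\ts n)^{3/2}$ in the upper bound introduces a positive correction of order $O(n^{-5/2})$, while the $x^2/2$ term and the exponential $2\ts e^{-C\sqrt n/10}$ are of order $O(n^{-3})$ and exponentially small, respectively. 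Since the constant gap contributes a term of order $(5-\pi/2)/(\sqrt{6}\;n^{3/2})$, every one of these corrections is dominated once $n$ is large, so the inequality $p_2(n) < \log(1+240/(24\ts n)^{3/2})$ holds for all $n\geq 2600$.

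Finally, I would verify numerically in {\sc Mathematica} that~\eqref{conjecture 1 eq} holds for the remaining range $7 \leq n \leq 2599$, completing the proof for all $n>6$. I expect the only genuinely delicate point to be confirming that the $O(n^{-5/2})$ shift from the $n-1$ in the denominator does not erode the constant gap before the cutoff $n=2600$; because the gap is of order $n^{-3/2}$ and the shift of order $n^{-5/2}$, this is comfortable, but it is the one place where the explicit threshold must be respected.
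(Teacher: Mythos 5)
Your proposal is correct and takes essentially the same route as the paper: reduce \eqref{conjecture 1 eq} to $p_2(n) < \log\bigl(1 + 240/(24n)^{3/2}\bigr)$, control $p_2(n)$ for large $n$ via Proposition~\ref{almost proof prop}, note the leading constants compare with room to spare, and verify the finitely many remaining cases numerically. The paper merely packages the estimate more cleanly, using the chain $p_2(n) < 2/n^{3/2} < 240/\bigl((24n)^{3/2}+240\bigr) < \log\bigl(1+240/(24n)^{3/2}\bigr)$ (the last step being $\log(1+x) \geq x/(1+x)$), valid already for $n \geq 22$, in place of your direct unpacking of \eqref{more explicit p bound} and the bound $\log(1+x) > x - x^2/2$.
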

\begin{proof}
The theorem follows similarly from the proof above, since for all \ts $n \geq 22$, we have
\[p_2(n) \. < \. \frac{2}{n^{3/2}} \, < \, \frac{240}{(24n)^{3/2} + 240} \, < \, \log\left( 1 + \frac{240}{(24n)^{3/2}}\right).\]
\end{proof}

\begin{conjecture}\label{conj3} For all \ts $n\geq 45$, we have
\[\frac{p(n-1)}{p(n)}\left(1 + \frac{\pi}{\sqrt{24} \. n^{3/2}}\right) > \frac{p(n)}{p(n+1)}\,.\]
\end{conjecture}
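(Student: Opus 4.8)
The plan is to follow the template of Theorems~\ref{t:1} and~\ref{conjecture 1}: rewrite the claimed inequality as a one-sided bound on $p_2(n)$ and then control $p_2(n)$ precisely. Dividing and taking logarithms, the assertion is equivalent to
\[ p_2(n) \, < \, \log\left(1 + \frac{\pi}{\sqrt{24}\, n^{3/2}}\right), \qquad n \geq 45. \]
The essential new difficulty is that the constant here is \emph{sharp}: since $\pi/\sqrt{24} = C/4$, Corollary~\ref{rate theorem} shows that the leading term of the left-hand side is asymptotically equal to the leading term of the right-hand side. Consequently the crude bound $p_2(n) < 2\,n^{-3/2}$ from Proposition~\ref{almost proof prop} is useless here, and one must compare the two sides to second order in~$n$.

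Next I would expand both sides one order further. On the right, $\log(1 + (C/4)n^{-3/2}) = (C/4)\,n^{-3/2} - \tfrac12 (C/4)^2 n^{-3} + O(n^{-9/2})$, which notably has no $n^{-2}$ term. On the left, write $p_2(n) = T_1(n) + O(e^{-C\sqrt{n}/10})$ using Lemma~\ref{log rate}, and expand $T_1(n) = 2\log T(n) - \log T(n-1) - \log T(n+1)$ via the identity $\log T(n) = \log d + \mu + \log(\mu-1) - 3\log\mu + O(e^{-\mu/2})$ from the proof of Lemma~\ref{lemma T(n)}. The symmetric second difference of the smooth part equals, to leading orders, $-\mu''(n) - (\log(\mu-1))''(n) + 3(\log\mu)''(n)$. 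Here $-\mu''(n) = 24\pi(24n-1)^{-3/2}$ contributes exactly the sharp term $(C/4)n^{-3/2}$, while the two logarithmic second derivatives combine to $\frac{-576}{(24n-1)^2} + O(n^{-5/2}) = -n^{-2} + O(n^{-5/2})$. Thus
\[ p_2(n) \, = \, \frac{C}{4}\,n^{-3/2} \, - \, n^{-2} \, + \, O(n^{-5/2}), \]
so that the difference of the two sides is $+\,n^{-2} + O(n^{-5/2}) > 0$ for all large~$n$. In other words, the conjecture holds precisely because the subleading term of $p_2(n)$ is negative of order $n^{-2}$, which comfortably beats the $O(n^{-3})$ correction on the right.

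To turn this into a proof I would, exactly as in Lemma~\ref{lemma T(n)}, replace each asymptotic step by an explicit two-sided inequality with guaranteed constants: expand $24\pi(24n-1)^{-3/2}$ with controlled $n^{-5/2}$ remainder, bound the difference between $(\log(\mu-1))''$ and $(\log\mu)''$ together with the fourth-order central-difference correction $\tfrac{1}{12}g^{(4)}$, and absorb $R_2(n,2)$ and the $e^{-C\sqrt{n}/10}$ terms exactly as before. This yields an effective bound $p_2(n) < (C/4)n^{-3/2} - \tfrac12 n^{-2}$ valid for all $n \geq N_0$ with some explicit $N_0$, which in turn gives the conjecture for $n \geq N_0$; the finitely many remaining cases $45 \leq n < N_0$ are then checked in {\sc Mathematica}.

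The main obstacle is precisely this effective bookkeeping at order $n^{-2}$. Because the inequality is sharp at the leading order $n^{-3/2}$, its entire margin is the $n^{-2}$ term, whose coefficient $-1$ must be pinned down together with provable control of every $O(n^{-5/2})$ error; near the threshold $n = 45$ these corrections are no longer negligible relative to the margin $\approx n^{-2} \approx 5\times 10^{-4}$, so the constants in the intermediate estimates must be tracked honestly rather than discarded. This is the step I expect to be genuinely delicate, and it is presumably why the statement is recorded here only as a conjecture.
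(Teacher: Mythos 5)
First, a point of context: the paper does \emph{not} prove this statement. It is recorded there precisely as Conjecture~\ref{conj3}, supported only by a {\sc Mathematica} check for $n \leq 8000$, together with the remark that Lemma~\ref{lemma T(n)} ``comes close'' but that one would have to control the error from replacing $24(n-1)-1$ by $24\ts n$ and the exponential contribution; a footnote records that the proof was supplied only later by Chen, Wang and Xie~\cite{CWX}. So there is no in-paper proof to compare against, and what you have written is essentially the program the paper itself sketches. Your asymptotic analysis is correct and well targeted: $\pi/\sqrt{24} = C/4$, so by Corollary~\ref{rate theorem} the inequality is sharp at leading order and Proposition~\ref{almost proof prop} is indeed useless; the second difference of $\mu + \log(\mu-1) - 3\log\mu$ does give $p_2(n) = (C/4)\ts n^{-3/2} - n^{-2} + O(n^{-5/2})$ (the combination $-(\log(\mu-1))'' + 3(\log\mu)'' = -576/(24n-1)^2 + O(n^{-5/2})$ checks out), while $\log\bigl(1 + (C/4)\ts n^{-3/2}\bigr)$ has no $n^{-2}$ term, so the margin is $n^{-2} + O(n^{-5/2})$. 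Your diagnosis of why Lemma~\ref{lemma T(n)} as stated is insufficient is also exactly right: its upper bound discards the negative $n^{-2}$ correction and evaluates the main term at $24(n-1)-1$, introducing a positive excess of order $n^{-5/2}$ that, alone, already exceeds the available margin structure at leading comparison --- this is precisely the paper's caveat.

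That said, as a proof the submission has a genuine (and honestly flagged) gap: the decisive sentence ``this yields an effective bound $p_2(n) < (C/4)\ts n^{-3/2} - \tfrac12\ts n^{-2}$ valid for all $n \geq N_0$ with some explicit $N_0$'' is asserted, not carried out, and it is the entire content of the problem. Until a sharpened upper-bound analogue of Lemma~\ref{lemma T(n)} is actually written down --- retaining the $-576/(24(n+1)-1)^2$-type term with a sign-correct argument shift, bounding the $\mu-1$ versus $\mu$ discrepancy and the $|x_{n\pm 1}|$ and $R_2(n,2)$ contributions by explicit constants --- and until $N_0$ is computed and the range $45 \leq n < N_0$ verified, nothing beyond the existing numerics is established. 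One small simplification worth noting: the $\tfrac{1}{12}\ts g^{(4)}$ central-difference refinement you propose is optional, since the sandwich of Lemma~\ref{lemma 0} applied termwise already loses only $O(n^{-5/2})$, which the $n^{-2}$ margin dominates; the refinement merely lowers $N_0$. In summary: right approach (essentially the one later executed in~\cite{CWX}), correct second-order asymptotics, but the quantitative step that separates a plan from a proof is left open --- the same state in which the paper leaves the statement.
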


We have checked this conjecture in {\sc Mathematica} for values of $n \leq 8000$, with violations occurring every even number less than~45.  Lemma~\ref{lemma T(n)} comes close to a proof, but one would have to check carefully the error involved in replacing the term $24(n-1)-1$ with $24\ts n$, as well as the exponential contribution.\footnote{Added in proof: Conjecture 4.3 was proved by
W.Y.C. Chen et al. in~\cite{CWX}.}

\bigskip

\section{Strong log-concavity}\label{s:strong}

\noindent
In this section, we establish Conjecture~\ref{conj2} on strong log-concavity of the partition function.

\begin{thm} [formerly Chen's Conjecture~\ref{conj2}]
\label{t:3}
For all \ts  $n >m>1$, we have
\begin{equation}
p(n)^2 \. - \. p(n-m)\ts p(n+m) \. > \. 0\ts.
\end{equation}
\end{thm}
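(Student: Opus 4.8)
The plan is to reduce the theorem to a sign statement about the quantity $D_m(n) := 2\log p(n) - \log p(n-m) - \log p(n+m)$, so that the claim becomes $D_m(n) > 0$ for all $n > m > 1$. The entry point is the elementary telescoping identity that expresses a second difference of step~$m$ as a triangularly weighted sum of unit second differences:
\[ D_m(n) \. = \. \sum_{j=-(m-1)}^{m-1} \bigl(m - |j|\bigr)\ts p_2(n+j)\ts, \]
with $p_2$ as in Proposition~\ref{almost proof prop}. Every weight $m-|j|$ is positive, so $D_m(n)$ is a positive combination of the numbers $p_2(k)$ as $k$ ranges over $\{n-m+1,\ldots,n+m-1\}$.

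First I would dispose of the \textbf{bulk region} $n-m\geq 25$. Here every index $k=n+j$ occurring in the sum satisfies $k\geq n-m+1\geq 26$, so $p_2(k) > 0$ throughout: this strict positivity holds by Proposition~\ref{almost proof prop} for $k\geq 2600$ and by the finite verification underlying Theorem~\ref{log concave theorem} for $26\leq k<2600$. A positive combination of strictly positive terms is strictly positive, so $D_m(n)>0$ follows immediately, with no further work.

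The genuinely harder part is the \textbf{near-diagonal strip} $1\leq n-m\leq 24$, where low-index terms $p_2(k)$ with $k\leq 25$ may be negative and the identity no longer fixes the sign. Here I would set $s=n-m\in\{1,\ldots,24\}$ and rewrite the claim as a comparison against a fixed constant, $F_s(n):=2\log p(n)-\log p(2n-s)>\log p(s)$. Using $p(N)=T(N)\bigl(1+O(e^{-C\sqrt N/10})\bigr)$ together with the explicit form of $T$, the leading behaviour of $F_s(n)$ is governed by $2\mu(n)-\mu(2n-s)=\tfrac{\pi}{6}\bigl(2\sqrt{24n-1}-\sqrt{48n-24s-1}\bigr)$, which grows like $\tfrac{\pi}{6}(2-\sqrt2)\sqrt{24n}\to+\infty$ while the logarithmic and exponential corrections stay bounded by the Lehmer estimate. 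The strict positivity of the constant $2-\sqrt2$ is exactly what forces $F_s(n)$ eventually to exceed the fixed value $\log p(s)$.

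The main obstacle is precisely this near-diagonal analysis, where ordinary log-concavity is of no help: one must turn the $2-\sqrt2$ gap into an \emph{explicit} threshold $N_s$ beyond which $F_s(n)>\log p(s)$, for each of the finitely many values $s\in\{1,\ldots,24\}$, controlling the subdominant terms carefully enough to make $N_s$ concrete. Since only finitely many residual pairs $(s,n)$ with $s+2\leq n<N_s$ then remain, these are verified directly in {\sc Mathematica}, which together with the bulk case completes the proof.
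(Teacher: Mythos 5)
Your proposal is correct, but its second half takes a genuinely different route from the paper. In the bulk region the mathematical content is the same: the paper quotes the standard implication that log-concavity on a range yields strong log-concavity (citing Sagan), specialized with $k=n-m$, $\ell=n+m$, $i=m$ to get the claim for $n>25+m$, and your weighted telescoping identity $D_m(n)=\sum_{j=-(m-1)}^{m-1}(m-|j|)\,p_2(n+j)$ is exactly one proof of that implication --- note that, like the paper, you correctly rely on \emph{strict} positivity of $p_2(k)$ for $26\le k<2600$ from the finite computation, since the theorem statement alone only gives $\ge$. Where you diverge is the near-diagonal strip: the paper never touches Rademacher--Lehmer asymptotics there, but instead uses monotonicity of $p$ to collapse all values of $s=n-m\le 25$ at once into the single one-parameter inequality $p(m+1)^2\ge p(25)\,p(25+m)$, settled by the elementary Erd\H{o}s upper bound $p(N)<e^{C\sqrt{N}}$ and Hardy--Ramanujan lower bound $p(N)>e^{2\sqrt{N}}/(2\pi N e^{1/6N})$, exploiting $4>C$, valid for $m\ge 300$ with one {\sc Mathematica} sweep for smaller $m$; you instead estimate $F_s(n)=2\log p(n)-\log p(2n-s)$ with Lehmer precision separately for each $s\in\{1,\dots,24\}$, riding on the $2-\sqrt{2}$ exponent gap. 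Your route works but costs $24$ separate explicit thresholds $N_s$ and heavier computation where the paper needs a single uniform inequality in $m$; on the other hand it is more direct and sidesteps the paper's monotonicity chain, whose step $p(n+m)\le p(25+m)$ as printed requires $n\le 25$ even though $n$ can be as large as $m+25$ (the bound should involve $p(2m+25)$, still rescued since $4>C\sqrt{2}$), a slip your approach never encounters. One small inaccuracy to fix: the logarithmic corrections in $F_s(n)$ are not bounded as you assert --- the $-2\log\mu$ terms contribute an $O(\log n)$ quantity tending to $-\infty$ --- though they are dominated by the $\sqrt{n}$ growth of the main term, so your conclusion stands.
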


\begin{proof}
First, recall that log-concavity implies strong log-concavity (see e.g.~\cite{Sa})
\[ a_k \ts a_\ell \. \leq \. a_{k+i}\ts a_{\ell-i}\., \]
for all $0 \leq k \leq \ell \leq n$ and $0 \leq i \leq k - \ell$.
Take $k=n-m$, $\ell = n+m$, and~$i = m$, to obtain:
\begin{equation}
p(n)^2 \. - \. p(n-m)\ts p(n+m) \. > \. 0 \quad \text{for all} \ \ n > 25+m\ts.
\end{equation}
To resolve the remaining cases, we use the following classical bounds which hold for all $m\ge 1$~:
$$e^{C \sqrt{m}} \, > \, p(m) \, > \,
\frac{e^{2\sqrt{m}}}{2\pi m \ts e^{1/6m}}\,.
$$
The first inequality was given in~\cite{E}, and the second is from Section~2 of~\cite{HR}.  Next,
observe that 
\begin{align*}
p(n-m) < p(25) <2000 & \quad \text{for all \, $1 \leq n-m \leq 25$\ts.}
\end{align*}
Hence, for all $1 \leq n-m \leq 25$, we have
\[ p(n)^2 \.\geq \. p(m+1)^2 \. \geq\. p(25)\ts p(25+m) \. \geq \. p(n-m)\ts p(n+m)\.,\]
and thus it suffices to prove the middle inequality for all $m \geq 2$.  But this is equivalent to
\[ 2 \log p(m+1) - \log p(25) - \log p(25+m) \. > \. 0\ts, \]
and by the inequalities above it is sufficient to establish for which values of $m$ the following holds:
\[ 4 \sqrt{m+1} - 2 \log(m+1) - \frac{1}{3(m+1)} - 2 \log(2\pi) - \log 2000 - C \sqrt{m+25} \. > \. 0\ts.\]
This is easily seen to hold for $m \geq 300$, and a calculation in {\sc Mathematica} easily establishes the remaining cases.
\end{proof}

\bigskip

\section{Sun's conjecture}\label{sec:Sun}
In \cite{Sun}, a similar conjecture is stated for $q(n) := p(n) / n$.
We prove it here.  

\begin{thm}[formerly Sun's Conjecture]
The sequence  $\{q(n)\}_{n \geq 31}$ is log-concave.  
\end{thm}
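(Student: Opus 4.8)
The plan is to reduce log-concavity of $q(n) = p(n)/n$ to the already-established sharp estimates on $p_2(n)$ from Proposition~\ref{almost proof prop}. Writing $q_2(n) := 2\log q(n) - \log q(n-1) - \log q(n+1)$, the sequence $\{q(n)\}$ is log-concave precisely when $q_2(n) \geq 0$ for all relevant~$n$. First I would observe that $\log q(n) = \log p(n) - \log n$, so the $\log p$ terms assemble exactly into $p_2(n)$ and the $\log n$ terms assemble into a discrete second difference of~$-\log n$. Explicitly,
\[
q_2(n) \. = \. p_2(n) \. - \. \bigl(2\log n - \log(n-1) - \log(n+1)\bigr) \. = \. p_2(n) \. + \. \log\!\left(1 - \frac{1}{n^2}\right).
\]
This identity is the crux: it expresses $q_2(n)$ as the partition-function quantity I already control, corrected by an explicit elementary term.

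The second step is to bound the correction term from below. Since $\log(1 - 1/n^2) > -1/(n^2-1)$ for $n \geq 2$ (or more crudely $\log(1-x) > -x/(1-x)$), and more usefully for large $n$ one has $\log(1-1/n^2) > -2/n^2$, the obstruction to log-concavity of $q$ is of order $n^{-2}$, whereas Proposition~\ref{almost proof prop} guarantees $p_2(n) > (24n)^{-3/2}$. The main worry is that $p_2(n) = \Theta(n^{-3/2})$ decays \emph{more slowly} than the $\Theta(n^{-2})$ correction, so asymptotically the positive $p_2(n)$ term dominates and $q_2(n) > 0$ for all large~$n$. Concretely, I would show
\[
q_2(n) \. > \. \frac{1}{(24\ts n)^{3/2}} \. - \. \frac{2}{n^2} \. > \. 0,
\]
where the final inequality holds once $n^{1/2} > 2 \cdot 24^{3/2}$, i.e.\ for $n$ larger than some explicit threshold in the low thousands. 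This is essentially the same $n^{-3/2}$ versus $n^{-2}$ race that drives Chen's conjecture in Theorem~\ref{t:1}, so the analytic heavy lifting is already done.

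The third and final step is the finite check. Having established $q_2(n) > 0$ for all $n$ beyond the explicit threshold furnished by Proposition~\ref{almost proof prop} (namely $n \geq 2600$, where the bound $p_2(n) > (24n)^{-3/2}$ is available and already beats $2/n^2$), I would verify log-concavity of $\{q(n)\}$ directly in {\sc Mathematica} for the remaining range $31 \leq n < 2600$. The cutoff $n \geq 31$ rather than $n > 25$ reflects that dividing by $n$ shifts the small-$n$ boundary where the elementary correction can still overwhelm the tiny $p_2(n)$; the finite computation pins down exactly where log-concavity first holds permanently. The main obstacle is not conceptual but bookkeeping: one must confirm that the crossover point where $(24n)^{-3/2}$ overtakes $2/n^2$ lies below $2600$, so that the interval covered by the asymptotic argument and the interval covered by the numerical check overlap and together exhaust all $n \geq 31$.
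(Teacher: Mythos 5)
Your reduction is exactly the paper's: the identity $q_2(n) = p_2(n) + \log\bigl(1 - 1/n^2\bigr)$ is the same decomposition the paper writes as $q_2(n) = p_2(n) - \log\bigl(n^2/((n-1)(n+1))\bigr)$, with the correction term bounded via Lemma~\ref{lemma 0} applied to $f(x) = \log x$. The problem is your quantitative closing step. From the packaged conclusion of Proposition~\ref{almost proof prop} you get $q_2(n) > (24n)^{-3/2} - 2/n^2$, and this is positive only when $\sqrt{n} > 2 \cdot 24^{3/2} \approx 235.2$, i.e.\ for $n > 55{,}296$ --- not ``in the low thousands,'' and certainly not below $2600$. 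The constant $24^{3/2} \approx 117.6$ buried in the Proposition's lower bound is what kills you: $(24n)^{-3/2} \approx 0.0085\, n^{-3/2}$ stays below $2/n^2$ until $n$ reaches the tens of thousands. So as written, your asymptotic regime ($n > 55{,}296$) and your numerical check ($31 \leq n < 2600$) do not overlap, and the range $2600 \leq n \leq 55{,}296$ is uncovered; the claim that ``the crossover point lies below $2600$'' is false.

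The gap is fixable, and the paper's repair is the natural one: instead of the deliberately weakened bound $p_2(n) > (24n)^{-3/2}$, go back inside the proof of Proposition~\ref{almost proof prop} to the sharper inequality \eqref{more explicit p bound}, whose lower bound has leading term $24\pi/(24(n+1)-1)^{3/2} \approx (\pi/\sqrt{24})\, n^{-3/2} \approx 0.64\, n^{-3/2}$. Subtracting the additional $1/(n+1)^2$ correction (this is precisely the paper's Equation~\eqref{q bound}) still leaves a positive quantity for all $n \geq 2600$: the polynomial corrections $3/n^2 + 1/(n+1)^2$ are an order of magnitude below the main term there, and the exponential term $2\,e^{-C\sqrt{n}/10}$ drops under the main term right around that threshold. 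Alternatively, you could keep your weak bound and extend the finite verification to $n \leq 56{,}000$ --- the paper in fact dispenses with its own computation for the small range by citing Sun's numerical check up to $10^5$. Either repair completes your argument, but one of them must be made explicit; with the constants you chose, the two halves of your proof do not meet.
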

\begin{proof}
Let $q_2(n) = 2\log q(n) - \log q(n-1) - \log q(n+1)$.  Then
\[ \log q_2(n) = \log p_2(n) - \log\left(\frac{n^2}{(n-1)(n+1)}\right). \]
By Lemma~\ref{lemma 0} with $f(x) = \log x$, we have
\[ \frac{1}{(n+1)^2} < \log\left(\frac{n^2}{(n-1)(n+1)}\right) < \frac{1}{(n-1)^2}. \]
We simply need to modify Equation~\ref{more explicit p bound} to include this new error, i.e.,
\begin{equation}\label{q bound}
\frac{24\ts\pi}{(24(n+1)-1)^{3/2}} \. - \. \frac{3}{n^2} \. - \. 2\ts e^{-C \ts \sqrt{n} \ts / \ts 10}
\, -\frac{1}{(n+1)^2}\, < \, q_2(n) \, < \, \frac{24\ts\pi}{(24(n-1)-1)^{3/2}} \. + \. 2\ts e^{-C \ts  \sqrt{n}\ts / \ts 10}\..
\end{equation}
The direct calculation is the same as in the proof of Lemma~\ref{almost proof prop}, which establishes 
the log-concavity of $q(n)$ for $n\geq 2600$.  Since Sun checked numerically the log-concavity 
of $q(n)$ for $n \leq 10^5$, see~\cite{Sun}, this completes the proof.
\end{proof}

\bigskip
\section{Final remarks}\label{sec:final}

\subsection{The log-concavity conjecture}\label{ss:fin-folk}
Let us mention that although the conjecture behind Theorem~\ref{log concave theorem}
seems to be folklore, we were unable to locate it in the literature.
The problem was revived by Chen~\cite{C}, who checked it for all $\ts n\le 8000$,
by Janoski (see the discussion below in~\S\ref{ss:fin-jan}), and most recently by Burde on \ts
{\it MathOverflow}.\footnote{See \ts {\tt http://tinyurl.com/kkc6fwf}\ts.}
After this paper was finished, we learned that it was also independently conjectured
by Andrews and Hirschhorn.\footnote{Personal communication.}  Most recently, a 
different but somewhat related investigation was made in~\cite{BO}.

\subsection{Historical background}\label{ss:fin-hist}
The classical Hardy--Ramanujan formula is
\begin{equation}\label{HR full}
p(n) \. = \. \frac{\sqrt{12}}{24\ts n-1} \sum_{k=1}^N A_k^{\ast}(n)\left[\left( 1 - \frac{k}{\mu}\right) e^{\mu/k}\right] \. + \. R_1(n,N)\ts.
\end{equation}
They derived the explicit forms of the $A_k^{\ast}(n)$ for $k=1,\ldots, 18$ and all $n$ as a linear combination of cosines~\cite{HR}.  This allowed them to compute expansions for various small values of $n$ that were checked by MacMahon.  In particular, two terms of their series give
\begin{equation}
p(n) \, \sim \, \frac{1}{2\pi \sqrt{2}} \.\frac{d}{dn} \left(\frac{e^{C \lambda_n}}{\lambda_n} \right) \. + \. \frac{(-1)^n}{2\pi} \. \frac{d}{dn}\left(\frac{e^{\frac{1}{2}C \lambda_n}}{\lambda_n}\right) \. + \. O(e^{D \sqrt{n}}),
\end{equation}
where
\[  \qquad C \. = \. \pi \ts \sqrt{\frac{2}{3}}\ , \qquad \lambda_n \. = \. \sqrt{24\ts n-1}
\quad \ \ \text{and any} \ \quad D \ts > \. \frac{C}{3} \..\]
This series was also extended to include $N = O(\sqrt{n})$ terms with an error of $O(n^{-1/4})$.  However,
it was later shown by Lehmer~\cite{L0} that this series diverges if extended to infinity.
The first term of the series easily suffices to establish log-concavity for sufficiently
large~$n$ (see~\S\ref{ss:fin-hist}), but there is no natural way to make these bounds explicit.
Rademacher modified the formula and produced a convergent series in~\cite{R}.
Most recently, this series used by Janoski~\cite[$\S 1.2$]{J}, to claim the proof of Theorem~1.
Unfortunately, this proof has a serious logical flaw (see below).

This relation has since been interpreted in the popular literature as
\[ p(n) = \frac{e^{C\ts \sqrt{n}}}{4 \sqrt{3}\, n} \left( 1 + O(n^{-1/2})\right). \]
It follows then, that one cannot base any heuristic interpretation of log-concavity
on this simplest form, as it has an error that is larger than the asymptotic
rate of decay given by Theorem~\ref{rate theorem}.

One can, however, embrace the \emph{entire} first-order term of the Hardy-Ramanujan series, and obtain
\[ p(n) = \frac{e^{C\ts \sqrt{n-\frac{1}{24}}}}{4 \sqrt{3}\, \left(n-\frac{1}{24}\right)}\left(1 - \frac{1}{C \sqrt{n-\frac{1}{24}}}\right) \left( 1 + O(e^{-\mu/2})\right). \]
The relative error term above decreases exponentially, and hence it easily follows that $\{p(n)\}$ is \emph{asymptotically log-concave},
i.e., that~$\{p(n)\}$ is log-concave for all $n>n_0$, for some \emph{unspecifiable}~$n_0$.\footnote{See e.g.,~Speyer's calculation in \. {\tt http://tinyurl.com/nyq2zrn} }  Without a concrete error bound in the form of a strict inequality, one cannot in general obtain the result.

The \emph{Rademacher series} for $p(n)$ is a convergent series of the form
\[ p(n) \, = \, \frac{\sqrt{12}}{24\ts n-1} \, \sum_{k=1}^\infty \. A_k^{\ast}(n)\ts \left[\left( 1 - \frac{k}{\mu}\right) e^{\mu/k} \ts +\ts \left( 1 + \frac{k}{\mu}\right) e^{-\mu/k}\right]\ts. \]
This series was shown to converge by Rademacher.  Equation \eqref{HR p(n)} is the truncation of this series to $N$ terms, and Rademacher provided a \emph{strict} inequality on the absolute value of the error term $R_2(n,N)$ for all $n$ and $N$.  This was later improved by Lehmer, and we have used his estimates even though in principle Rademacher's estimates would have been sufficient.

An important part of the proof of Theorem~\ref{log concave theorem} is the form of the error.  
 Rademacher's upper bound can be written for some positive constant $c'$ as
\[|R_2(n,N)| \. < \. c' e^{\mu/N} \frac{\sqrt{N}}{\mu} \quad \text{for all \. $n\ts, \ts N\geq 1$\ts.}\]
Lehmer's upper bound can be written for some other positive constant $c''$ as
\[|R_2(n,N)| \. < \. c'' e^{\mu/N}\frac{N^3}{\mu^3} \quad \text{for all \. $n\ts, \ts N\geq 1$\ts.}\]
Thus, neither Rademacher's nor Lehmer's upper bounds for $N=1$ provide a sharp enough bound to have a relative error $o(n^{-3/2})$.  In particular, the exponent $\mu/N$ implies that we must take $N \geq 2$ in order to obtain an exponentially decaying error term relative to the first term.  This demonstrates that one cannot obtain even a proof of asymptotic log-concavity using this error analysis with $N=1$.

\subsection{Janoski's thesis}\label{ss:fin-jan}
Recently, Janoski in her thesis~\cite{J} claimed to have a complete proof of log-concavity, for all~$n>25$.
Unfortunately, there is a serious technical error early in the proof, invalidating the argument.
Specifically, Janoski writes on page~9 the following inequality:
$$\frac{1}{\pi \sqrt{2}} \. \sum_{k=3}^{C\sqrt{t}} \. A_k(n) \sqrt{k} \. d(n,k) \, > \, C \sqrt{t} \ts \frac{\sqrt{3}}{\pi \sqrt{2}} \. A_3(n) \. d(n,3),$$
where
$$t \. = \. n\ts - \ts \frac{1}{24}\., \qquad  A_k(n) \. = \. A_k^\ast(n)/\sqrt{k}\., \quad \text{and} \quad d(n,k) \. = \.
\left(\frac{\sinh C\sqrt{t}/k}{\sqrt{t}} \right)'.
$$
It seems, the author assumes that the functions $A_k(n)$ are positive and monotonic in~$k$.  We have checked this inequality for various small values of~$n$, and have found several counterexamples, such as $n$ equal to $27, 36, 87$ or~$744$.  Furthermore, the calculation bounding the absolute error $R_2(n,N)$ is incorrect. The rest of the analysis in~\cite{J} is based on bounding from below this faux lower bound term, which has a relative error $O(e^{-2\mu/3})$ (though not in the form of a strict inequality!), and so naturally the numerical results would appear to confirm the entire result.

\subsection{Computer calculations}
All of the calculations for small values of~$n$ mentioned in the proof above are trivial, if tedious.
The calculations were done with {\sc Mathematica}, which has a built-in partition function.
%
%
Let us note, however, that strictly speaking, the use of computer is not necessary to follow
and verify the proofs of the Lemmas, as there are comprehensive tables of the partition function,
see e.g.,~\cite{G}.

\subsection{Combinatorial proof}
It is natural to ask for a direct combinatorial proof of
Theorem~\ref{log concave theorem}, e.g.,~in the style of proofs in~\cite{Bo,HS,Kr,Sa}.
Unfortunately, the previous attempts and the fact that log-concavity fails for
small~$n$, are very discouraging.   It seems, even the simplest results for
the partition function are difficult to prove directly (cf.~\cite{A}).

Note, however, that for related sequences such as \emph{convex compositions}
(or \emph{stacks}), the asymptotic formulas are similar to~\eqref{HR asymptotics},
and thus not sharp enough for the analysis as in this paper (see~\cite{Br,Wr}).
A combinatorial proof is the best current hope for proving asymptotic
log-concavity of such sequences.

\subsection{Another Chen's conjecture}
Note that Theorem~\ref{t:3} says that the sequences $\{p(m\ts n + a)\}$ are
log-concave, for all fixed $m > a \ge 0$. In~\cite{C}, Chen also makes
a related conjecture, that for all $a>b$ we have
\[p(a\ts n)^2 \. - \. p(a\ts n - b\ts n)\. p(a\ts n + b\ts n) \. > \. 0\quad
\text{whenever \ $a\ts n, \. b\ts n \ts \in \mathbb{N}$\ts.} \]
Of course, this is a weak version of Conjecture~\ref{conj2}.
Now that Theorem~\ref{t:3} is proved, it also follows.

\subsection{Another Sun's conjecture}
Let $r(n) := \sqrt[n]{p(n)/n}$.  In~\cite{Sun} (see also~\cite{Sun1}), 
Sun conjectures that $\{r(n)\}$ 
is log-concave for $n\ge 60$.  Unfortunately, the calculation for $r(n)$ 
does not follow immediately from our lemmas, since there is no simple lemma 
analogous to Lemma~\ref{lemma 0}.  However, it is easy to see that 
such result only requires a more careful bounding of elementary functions;
we hope the reader will continue this investigation.  Let us mention also
that in the same paper, Sun makes similar conjectures for partitions 
into distinct part, supported by numerical evidence.

\vskip.8cm

\subsection*{Acknowledgements}
The authors would like to thank William Chen, Christian Krattenthaler,
Karl Mahlburg, Bruce Rothschild, Bruce Sagan for helpful remarks and suggestions.
The authors are grateful to Richard Arratia for introducing us to the problem,
to David Moews for bringing the work of Lehmer to our attention, and to
Janine Janoski for informing us about the status of~\cite{J}.
The second author was partially supported by the~NSF.

\vskip.9cm


{\footnotesize

\end{document}